\newtheorem{theorem}{Theorem}[section]
\newtheorem{prop}[theorem]{Proposition}
\newtheorem{lemma}[theorem]{Lemma}
\newtheorem{corollary}[theorem]{Corollary}
\numberwithin{equation}{section}
\begin{document}

\title{Note on the backwards uniqueness of mean curvature flow}
\author{Zhuhong Zhang}
\address{School of Mathematical Sciences, South China Normal Univeristy, Guangzhou, P. R. China 510275}
\email{juhoncheung@sina.com}

\keywords{mean curvature flow, backwards uniquenesses theorem, evolution equation}

\begin{abstract}
In this note, we will show a backwards uniqueness theorem of the mean curvature flow with bounded second fundamental form in arbitrary codimension.
\end{abstract}

\maketitle
\section{Introduction}

Let $X_0:\ M^m\rightarrow R^{m+n}$ be an $m$-dimensional immersion in the $(m+n)-$dimensional Euclidean space. The mean curvature flow (MCF) is a deformation of the position vector $X$, starting from $X_0$ at $t=0$, in the direction of the mean curvature vector $H$,
$$ X : M^m\times [0, T) \rightarrow R^{m+n}, \qquad \frac{\partial}{\partial t}X=H.$$

In general, the MCF can be written as a quasilinear parabolic equation
$$\frac{\partial}{\partial t}X=\Delta_{g(t)}X,$$
where $g(t)$ is the induced metric on  $M^m$, and $\Delta_{g(t)} X$  is the harmonic map Laplacian from $(M, g(t))$ to $R^{m+n}$. When $M^m$ is a compact hypersurface  in the Eucidean space $R^{m+1}$, it is well-known that 
the MCF has a unique short time solution. For $m$-dimensional complete immersed local Lipschitz hypersurface in $R^{m+1}$, we refer the readers to \cite{EH} for more information.
While the short time existence and uniqueness of MCF have not been established  in the literature for complete immersed submanifolds of arbitrary codimensions in a general ambient Riemannian manifold, 
Chen and Yin \cite{CY} have proved the uniqueness of the MCF of general codimensions and in general ambient manifolds with bounded geometry.

Our main theorem of this paper is the following backwards uniqueness result of MCF.

\begin{theorem}
Let $M^m$ be an $m-$dimensional smooth manifold. Suppose $X,\tilde{X}: M^m \rightarrow R^{m+n}$ are two smooth complete solutions of the mean curvature flow with bounded second fundamental form:  $|h_{ij}|_{g}\leq K$ and $|\tilde{h}_{ij}|_{\tilde{g}}\leq \tilde{K}$ on $M^m\times [0,T]$. If $X(x,T)=\tilde{X}(x,T)$ for all $x\in M^m$, then $X(x,t)=\tilde{X}(x,t)$ for all $x \in M^m$ and all $t\in [0,T]$.
\end{theorem}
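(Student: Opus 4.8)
The plan is to reduce the statement to a backwards-uniqueness theorem for a coupled parabolic--ODE differential-inequality system of the type established via Carleman estimates (as in Kotschwar's work on the Ricci flow, ultimately resting on the methods of Alexakis--Ionescu--Klainerman), and then to verify that the natural difference quantities between the two flows satisfy such a system. Throughout I would fix the metric $g=g(t)$ induced by the first solution $X$ as a background and measure and differentiate every tensor with respect to $g(t)$ and its Levi--Civita connection $\nabla$. The relevant difference quantities are the $R^{m+n}$-valued objects $W:=X-\tilde X$ and $S:=A-\tilde A$ (the difference of the two second fundamental forms, viewed as ambient-vector-valued symmetric $2$-tensors), together with $a:=g-\tilde g$ and $b:=\Gamma-\tilde\Gamma$, the differences of the induced metrics and of the two Levi--Civita connections (the latter being a genuine $(1,2)$-tensor). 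Note that $\nabla a$ is algebraically controlled by $b$, since $\tilde\nabla\tilde g=0$ forces $|\nabla a|\le C|b|$.

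I would then show that $(W,S)$ plays the role of the parabolic unknown and $(a,b)$ the role of the ODE unknown. For $W$, expanding $\partial_t X=\Delta_{g}X$ and $\partial_t\tilde X=\Delta_{\tilde g}\tilde X$ in coordinates gives
\[ \partial_t W=\Delta_g W+(g^{ij}-\tilde g^{ij})\tilde A_{ij}-\tilde g^{ij}b^k_{ij}\partial_k\tilde X-(g^{ij}-\tilde g^{ij})b^k_{ij}\partial_k\tilde X, \]
where the coordinate-dependent Christoffel contributions cancel and the remainder is bounded by $C(|a|+|b|)$ using $|\tilde A|\le\tilde K$ and the uniform bound on $|\partial\tilde X|$. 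For $a$ one uses the induced-metric evolution $\partial_t g_{ij}=-2\langle H,A_{ij}\rangle$; differencing and writing $H-\tilde H=g^{kl}S_{kl}+(g^{kl}-\tilde g^{kl})\tilde A_{kl}$ yields $|\partial_t a|\le C(|S|+|a|)$. For $b$ one uses $\partial_t\Gamma^k_{ij}=\tfrac12 g^{kl}(\nabla_i v_{jl}+\nabla_j v_{il}-\nabla_l v_{ij})$ with $v_{ij}=-2\langle H,A_{ij}\rangle$, which upon differencing produces first derivatives of $S$ and lower-order terms, giving $|\partial_t b|\le C(|\nabla S|+|S|+|a|+|b|)$. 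Finally, starting from the Simons-type evolution $\partial_t A_{ij}=\Delta A_{ij}+A\ast A\ast A$ satisfied by each second fundamental form, the difference of the two right-hand sides contributes $(\Delta_g-\Delta_{\tilde g})\tilde A$, expressible through $a,b,\nabla b$ and the bounded derivatives $\nabla\tilde A,\nabla^2\tilde A$, plus quadratic terms controlled by $|S|+|a|$, whence $|(\partial_t-\Delta_g)S|\le C(|S|+|\nabla S|+|a|+|b|+|\nabla b|)$. With $\mathcal U=(W,S)$ and $\mathcal V=(a,b)$ these are exactly of the form
\[ |(\partial_t-\Delta_g)\mathcal U|\le C(|\mathcal U|+|\nabla\mathcal U|+|\mathcal V|+|\nabla\mathcal V|),\qquad |\partial_t\mathcal V|\le C(|\mathcal U|+|\nabla\mathcal U|+|\mathcal V|). \]

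The hypothesis $X(\cdot,T)=\tilde X(\cdot,T)$ gives $W(\cdot,T)\equiv 0$, hence all spatial derivatives of $W$ vanish at $t=T$; consequently the frames $\partial_i X,\partial_i\tilde X$ agree at $T$, so $a(T)=0$, then $b(T)=0$, and comparing Hessians gives $S(T)=0$. Thus $(\mathcal U,\mathcal V)$ vanishes identically on $M$ at $t=T$. I would then invoke the abstract backwards-uniqueness theorem for systems of the above type on a complete manifold of bounded geometry to conclude $(\mathcal U,\mathcal V)\equiv 0$ on $M\times[0,T]$; in particular $W\equiv 0$, i.e. $X=\tilde X$ throughout.

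The main obstacle is the analytic core, namely the backwards-uniqueness theorem itself: establishing the two Carleman-type estimates (one parabolic, one for the transport part) with matching weights on a \emph{complete, noncompact} manifold, where one must control boundary terms at spatial infinity through carefully chosen cutoffs and exploit the uniform boundedness of all difference quantities. A secondary technical point is that the derivation of the system needs uniform bounds not merely on $|A|,|\tilde A|$ but also on $\nabla\tilde A$ and $\nabla^2\tilde A$; these are not assumed directly but follow from bounded $|A|$ by Shi-type interior estimates for the mean curvature flow, with the behaviour near the ends of the time interval handled by working on subintervals. Once the system and these bounds are in place, the conclusion is immediate from the abstract theorem.
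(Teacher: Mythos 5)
Your overall strategy---using the flat ambient structure to make sense of differences of second fundamental forms, deriving a coupled PDE--ODE system for the difference quantities, and invoking a Kotschwar-type backwards-uniqueness theorem---is the same as the paper's, and several of your computations (the equation for $X-\tilde X$, the endpoint vanishing $a(T)=b(T)=S(T)=0$, the need for Bernstein/Huisken bounds on $\nabla^k h$, $\tilde\nabla^k\tilde h$) are correct. But there is a genuine gap in how you close the system. The parabolic inequality you derive,
\[
\big|(\partial_t-\Delta_g)\mathcal U\big|\le C\big(|\mathcal U|+|\nabla\mathcal U|+|\mathcal V|+|\nabla\mathcal V|\big),
\]
carries $|\nabla\mathcal V|$ on the right-hand side, coming from the term $\nabla b\ast\tilde A$ inside $(\Delta_g-\Delta_{\tilde g})\tilde A$. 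This is \emph{not} of the form to which the abstract theorem applies. The admissible system in Kotschwar's theorem (and in any backwards-uniqueness result of this Carleman type) is
\[
\big|(\partial_t-\Delta)\mathcal U\big|\le C\big(|\mathcal U|+|\nabla\mathcal U|+|\mathcal V|\big),\qquad
\big|\partial_t\mathcal V\big|\le C\big(|\mathcal U|+|\nabla\mathcal U|+|\mathcal V|\big),
\]
with no gradient of the ODE unknown anywhere: an ordinary differential inequality gives no control on spatial derivatives of $\mathcal V$, so a term $|\nabla\mathcal V|$ in the heat inequality cannot be absorbed by the Carleman estimates. Indeed, if such a theorem were available, the enlargement apparatus in Kotschwar's Ricci flow paper would itself be unnecessary. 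So "these are exactly of the form" is where the argument breaks.

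The fix is precisely the step your proposal skips, and it is the main computational content of the paper. One must enlarge both unknowns by one derivative: put $\nabla b$ into the ODE vector (this is the paper's tensor $W=\nabla N$, not to be confused with your $W=X-\tilde X$), so that the offending term is read as $|\mathcal V|$; but then $\partial_t\nabla b=\nabla\partial_t b+(\partial_t\Gamma)\ast b$ contains $\nabla^2 S$, a \emph{second} derivative of a parabolic unknown, which is also inadmissible. This in turn forces one to put the first-derivative difference $\nabla h^\alpha-\tilde\nabla\tilde h^\alpha$ (the paper's $V^\alpha$) into the parabolic vector, so that $\nabla^2 S$ becomes the gradient of a PDE unknown, and then to verify that this new component itself satisfies a heat-type inequality of the admissible form. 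These two additional evolution computations are parts 4) and 6) of the paper's Lemma 3.2---the longest calculations in the paper, requiring commutation of $\nabla$ with $\Delta$, the Gauss equation, and the derivative bounds you anticipated. With the enlarged vectors $Y=(U^\alpha,V^\alpha)$ and $Z=(w^\alpha,d,N,\nabla N)$ the system closes in the admissible form and the conclusion follows as you describe; your remaining deviations from the paper (carrying $X-\tilde X$ as a parabolic unknown rather than recovering $X=\tilde X$ at the end by integrating $\partial_t(X-\tilde X)=H-\tilde H=0$ from $t=T$) are harmless.
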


As a direct consequence, we have

\begin{corollary}
If $X(t): M^m \rightarrow R^{m+n}$ is a solution to  the MCF with bounded second fundamental form on $[0, T]$, and $g(t)$ is the induced metric.  Let $\bar{\sigma}$ be an isometry of $R^{m+n}$ such that there is an isometry $\sigma$ of $(M^m, g(T))$ satisfying
$$(\bar{\sigma}\circ X(T))(x)=(X(T)\circ\sigma)(x)$$
for all $x\in M$. Then we have
$$(\bar{\sigma}\circ X(t))(x)=(X(t)\circ\sigma)(x)$$
for all $(x, t)\in M\times [0, T]$. In particular, the isometry subgroup of $(M^m, g(T))$ induced by an isometry subgroup of $R^{m+n}$ remains to be an isometry subgroup of $(M^m, g(t))$ for any $t\in [0, T]$. 
\end{corollary}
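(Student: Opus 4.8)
The plan is to manufacture, out of the single given flow $X(t)$, a second mean curvature flow that agrees with $X$ at the terminal time $T$, and then invoke the backwards uniqueness Theorem. Concretely, I would set
$$\tilde X(t) := \bar\sigma^{-1}\circ X(t)\circ\sigma, \qquad t\in[0,T].$$
Since $\sigma$ is an isometry of $(M^m,g(T))$, it is in particular a (smooth) diffeomorphism of $M^m$, so $\tilde X(t)$ is again a family of immersions of $M^m$ into $\mathbb{R}^{m+n}$.

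First I would record the two invariance properties of MCF that make $\tilde X$ a solution. The equation $\partial_t X = H$ is invariant under post-composition with a fixed ambient isometry: an isometry $\bar\sigma^{-1}$ of $\mathbb{R}^{m+n}$ carries the mean curvature vector of $X(t)$ to that of $\bar\sigma^{-1}\circ X(t)$ through its differential. It is also invariant under pre-composition with a fixed diffeomorphism $\sigma$ of $M^m$, because the mean curvature vector is independent of the parametrization, so $H_{X(t)\circ\sigma}(x)=H_{X(t)}(\sigma(x))$. Combining the two identities shows that $\tilde X(t)$ solves $\partial_t\tilde X = H_{\tilde X}$. Moreover both operations are isometries of the relevant spaces, so $\tilde X(t)$ is again complete and its second fundamental form satisfies the same bound $|\tilde h_{ij}|_{\tilde g}\le K$ on $M^m\times[0,T]$; thus $\tilde X$ meets the hypotheses of the Theorem.

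Next I would check the terminal condition. Unwinding the hypothesis $\bar\sigma(X(T)(x))=X(T)(\sigma(x))$ gives
$$\tilde X(T)(x)=\bar\sigma^{-1}\big(X(T)(\sigma(x))\big)=\bar\sigma^{-1}\big(\bar\sigma(X(T)(x))\big)=X(T)(x),$$
so $X(\cdot,T)=\tilde X(\cdot,T)$. The Theorem then forces $X(t)=\tilde X(t)$ for all $t\in[0,T]$, i.e. $X(t)\circ\sigma=\bar\sigma\circ X(t)$, which is precisely the asserted identity $(\bar\sigma\circ X(t))(x)=(X(t)\circ\sigma)(x)$.

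Finally, for the ``in particular'' clause I would pull back the Euclidean metric. Since $g(t)=X(t)^*\langle\cdot,\cdot\rangle$ and $\bar\sigma$ is an ambient isometry,
$$\sigma^*g(t)=(X(t)\circ\sigma)^*\langle\cdot,\cdot\rangle=(\bar\sigma\circ X(t))^*\langle\cdot,\cdot\rangle=X(t)^*\langle\cdot,\cdot\rangle=g(t),$$
so $\sigma$ is an isometry of $(M^m,g(t))$ for every $t$; applying this to each element of a prescribed subgroup yields the subgroup statement, since composition and inverses are respected. The only point genuinely requiring care is the verification in the second paragraph that $\tilde X$ is a bona fide complete MCF carrying the same curvature bound; once this is settled, the whole corollary collapses to a single application of the Theorem, so I expect no substantive obstacle beyond that bookkeeping.
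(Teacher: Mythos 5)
Your proposal is correct and follows essentially the same route as the paper: both reduce the corollary to a single application of Theorem 1.1 by producing two MCF solutions, built from $X$ via the ambient isometry and the reparametrization, that agree at time $T$. The only cosmetic difference is that you compare $X(t)$ with the conjugated flow $\bar\sigma^{-1}\circ X(t)\circ\sigma$, whereas the paper compares $\bar\sigma\circ X(t)$ with $X(t)\circ\sigma$ directly; your write-up additionally spells out the invariance and completeness checks and the pullback argument for the ``in particular'' clause, which the paper leaves implicit.
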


The study of backwards-uniqueness and unique-continuation properties for solutions of parabolic equations has a long history ( c.f. Mizohata\cite{M}, Yamabe\cite{Y}, Lees-Protter\cite{LP}, Agmon-Nirenberg\cite{AN} , Lin\cite{L} and the references therein). In a recent work of Kotschwar \cite{K10}, he showed a backwards uniqueness theorem for the Ricci flow, which is a weakly parabolic system, with bounded curvature. In fact, Kotschwar \cite{K10} was able to deal with the backwards uniqueness problem for some more general weakly parabolic system. One of the key points in his work is to reduce the backwards uniqueness problem for weakly parabolic system to one for a larger system of coupled differential inequalities, a pair of partial differential inequality and ordinary differential inequality.

On the other hand, Wang in \cite{LW14} obtained an uniqueness result for asymptotically conical  co-dim one MCF self-shrinker,  by reducing to the backwards uniqueness of related self-shrinking solutions to the MCF. By the same general strategy, Kotschwar and Wang \cite{KW} proved a similar uniqueness theorem for asymptotically conical  shrinking gradient Ricci solitons.

In this paper, motivated by geometric applications of submanifold theory, we consider the backwards uniqueness problem of the MCF. 
As in \cite{K10}, we want to convert Theorem  1.1 into a backwards uniqueness for a PDE-ODE system. But there is a difficulty that we can not construct geometric tensor fields as in \cite{K10}, since the second fundamental form $h_{ij}$ and $\tilde{h}_{ij}$ are defined in different domains $X$ and $\tilde{X}$ on the target space.
To deal with this problem, we notice that the target space is the Euclidean space, which has a fixed globe coordinate system. So we can decompose all the geometric quantities on $X$ and $\tilde{X}$, and get some abstract intrinsic tensors on $M$ from the pullbacks. Fortunately, by a long computations, we can show that these tensors satisfy a PDE-ODE system, from which the  backwards uniqueness result follows.

{\bf Acknowledgements.} The author was partially supported by NSFC 11301191.
This first draft was written while the author was visiting the University of Macau from February 
to August 2014, where he was supported by Science and Technology Development Fund (Macao S.A.R.) 
FDCT/016/2013/A1. He is grateful to Professor Huai-Dong Cao for encouragement and very helpful discussions.

\section{Preliminaries}

Let $X(x,t):M^m\rightarrow R^{m+n}$
be one-parameter family of smooth immersions in $R^{m+n}$ evolves by the mean curvature flow
$$\frac{\partial}{\partial t}X(x,t)=H(x,t).$$

For any fixed $X(t)$, we have an induced metric $g$ on $M^m$. And the second fundamental form $II$ define by
$II(V,W)=\bar{\nabla}_{\tilde{V}}\tilde{W}-\nabla_{\tilde{V}}\tilde{W}=(\bar{\nabla}_{\tilde{V}}\tilde{W})^{\bot}$, where $\bar{\nabla}$ and $\nabla$ are the covariant derivatives of $R^{m+n}$ and $(M^m, g)$ respectively, $\tilde{V}$ and $\tilde{W}$ are any smooth extensions of $V$ and $W$ on $R^{m+n}$.

For a local coordinate system $\{x^i\}$ of $M$, the metric and second fundamental form on $X$ can be computed as follows
$$g_{ij}=\bigg(\frac{\partial X}{\partial x^i}, \frac{\partial X}{\partial x^j}\bigg),$$
$$h_{ij}=II\bigg(\frac{\partial}{\partial x^i}, \frac{\partial}{\partial x^j}\bigg)=\nabla_i X_j =\frac{\partial^2 X}{\partial x^i \partial x^j}-\Gamma^k_{ij}X_k,$$
and the mean curvature vector $H=g^{ij}h_{ij}$.

In the following, we always give the standard coordinate system $\{y^\alpha\}$ for $R^{m+n}$ and then the immersion can be denote by $X(x,t)=(X^1,\cdots, X^{\alpha}, \cdots, X^{m+n})$. Thus the metric and second fundamental form are 
given by the following
$$g_{ij}=\bigg(\frac{\partial X}{\partial x^i}, \frac{\partial X}{\partial x^j}\bigg)=\sum\limits_\alpha X^\alpha _iX^\alpha_j,$$
$$h_{ij}=h^\alpha_{ij}\frac{\partial}{\partial y^{\alpha}}=\bigg(\frac{\partial^2 X^\alpha}{\partial x^i \partial x^j}-\Gamma^k_{ij}X^\alpha_k\bigg)\frac{\partial}{\partial y^{\alpha}}.$$

The immersion $X(t)$, second fundamental form $h_{ij}(t)$ and mean curvature vector $H(t)$ are all extrinsic geometric quantities. However, for the fixed coordinate $\{y^\alpha\}$ of $R^{m+n}$, we have some simply facts.

\begin{lemma}
For a family of immersions $X(t)$, we have the following facts:\\
 1)\quad  $X^{\alpha}(t)$ is a family of smooth functions on M for any $\alpha=1, \cdots, m+n$; \\
 2)\quad $g_{ij}(t)$ is a family of metrics on M; \\
 3)\quad  $h^{\alpha}_{ij}(t)$ is a family of tensors on M for any $\alpha=1, \cdots, m+n$; \\
 4)\quad  $H^{\alpha}(t)$ is a family of smooth functions on M for any $\alpha=1, \cdots, m+n$. \\
\end{lemma}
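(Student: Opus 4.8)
The plan is to treat the four assertions uniformly by exploiting the fact that $R^{m+n}$ carries a fixed \emph{flat} coordinate frame $\{\partial/\partial y^\alpha\}$, so that decomposing any extrinsic quantity into its $\alpha$-components produces honest intrinsic objects on $M$. Assertions (1) and (2) are essentially formal. For (1), $X^\alpha = y^\alpha\circ X$ is the composition of the smooth family of immersions with the smooth coordinate function $y^\alpha$, hence a smooth function of $(x,t)$. For (2), $g(t)$ is by construction the pullback $X(t)^\ast(\cdot,\cdot)$ of the Euclidean inner product, so $g_{ij}=\sum_\alpha X^\alpha_i X^\alpha_j$ transforms as a $(0,2)$-tensor automatically; positive-definiteness is exactly the immersion condition that $\{\partial X/\partial x^i\}$ be linearly independent, so each $g(t)$ is a genuine Riemannian metric.

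The heart of the lemma is (3). I would recognize the stated formula
$$h^\alpha_{ij}=\frac{\partial^2 X^\alpha}{\partial x^i\partial x^j}-\Gamma^k_{ij}X^\alpha_k$$
as precisely the covariant Hessian $\nabla_i\nabla_j X^\alpha$ of the scalar function $X^\alpha$ with respect to the Levi-Civita connection of $g(t)$. Since the Hessian of a smooth function is always a symmetric $(0,2)$-tensor, each $h^\alpha_{ij}$ is a tensor on $M$. If one prefers a direct verification, the point is that under a coordinate change the non-tensorial second-derivative terms arising from $\partial^2 X^\alpha$ cancel exactly against the inhomogeneous part of the transformation law for $\Gamma^k_{ij}$; this is the single computation I would write out in full.

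Finally, (4) follows by contracting: $H^\alpha=g^{ij}h^\alpha_{ij}=\Delta_{g(t)}X^\alpha$ is the trace of the tensor $h^\alpha_{ij}$ against the inverse metric, hence a scalar and a smooth function on $M$. The only genuine subtlety --- and the step I would treat most carefully --- is the tensoriality in (3): the second fundamental form is a priori a section of the \emph{normal bundle}, which varies with the immersion, so it is not obvious that its $\alpha$-components are intrinsic. What makes this work is that $\{\partial/\partial y^\alpha\}$ is parallel in $R^{m+n}$, so projecting onto it commutes with covariant differentiation along $M$ and each $X^\alpha$ may be treated as an ordinary scalar. This is exactly the device that later permits the reduction of the backwards-uniqueness problem to an intrinsic PDE--ODE system.
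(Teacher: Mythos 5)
Your proposal is correct; the paper states this lemma without any proof, treating it as ``simple facts,'' and your argument is exactly the verification implicit in the paper's setup: decomposition in the fixed Euclidean coordinates $\{y^\alpha\}$, with $h^\alpha_{ij}=\nabla_i\nabla_jX^\alpha$ recognized as the covariant Hessian of the scalar $X^\alpha$ (which is automatically the normal-valued second fundamental form's $\alpha$-component, since the tangential part of $\partial_i\partial_j X$ is precisely $\Gamma^k_{ij}X_k$ by the Gauss formula). Your closing observation --- that tensoriality of the components holds because the frame $\{\partial/\partial y^\alpha\}$ is parallel, so the a priori normal-bundle-valued object decomposes into intrinsic scalar-valued tensors --- is exactly the device the paper invokes in the introduction to overcome the obstacle that $h_{ij}$ and $\tilde h_{ij}$ live over different immersions.
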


Thus, we have a Riemannian metrics $g_{ij}(t)$ on $M$, and get some intrinsic geometric quantities $X^\alpha(t)$, $h_{ij}^\alpha(t)$ and $H^\alpha(t)$. Further more, the above geometric quantities satisfies the 
following evolution equations.

\begin{prop}

\begin{align*}
 1)\qquad & \frac{\partial}{\partial t} X^\alpha_i=\nabla_i H^\alpha , \\
 2)\qquad & \frac{\partial}{\partial t} g_{ij}=-2\sum\limits_\alpha H^\alpha h^\alpha_{ij}, \\
 3)\qquad & \frac{\partial}{\partial t} \Gamma_{ij}^k=-g^{kl}\Big[\nabla_i\Big(\sum\limits_\alpha H^\alpha h^\alpha_{jl}\Big)+\nabla_j\Big(\sum\limits_\alpha H^\alpha h^\alpha_{il}\Big)-\nabla_l\Big(\sum\limits_\alpha H^\alpha h^\alpha_{ij}\Big)\Big],\\
 4)\qquad & \frac{\partial}{\partial t} h^\alpha_{ij}=\nabla_i \nabla_j H^\alpha- \frac{\partial}{\partial t} \Gamma_{ij}^k \cdot X^\alpha_k.
\end{align*}

\end{prop}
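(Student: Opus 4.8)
The plan is to exploit the single structural feature that makes the component viewpoint so convenient: in the fixed standard coordinates $\{y^\alpha\}$ of $R^{m+n}$, each $X^\alpha$ and each $H^\alpha$ is an honest scalar function on $M^m\times[0,T]$. Consequently the time derivative $\partial_t$ and the coordinate derivatives $\partial_i$ commute freely, and all four identities reduce to elementary manipulations together with one geometric input. For \emph{part 1)}, since $X^\alpha_i=\partial_i X^\alpha$ and $\partial_t X^\alpha=H^\alpha$, I would simply commute derivatives,
$$\frac{\partial}{\partial t}X^\alpha_i=\partial_i\frac{\partial}{\partial t}X^\alpha=\partial_i H^\alpha=\nabla_i H^\alpha,$$
the last equality holding because $H^\alpha$ is a scalar and its covariant derivative agrees with the ordinary one.

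For \emph{part 2)} I would differentiate $g_{ij}=\sum_\alpha X^\alpha_i X^\alpha_j$ in time and insert part 1) to get $\partial_t g_{ij}=\sum_\alpha(\nabla_i H^\alpha\, X^\alpha_j+X^\alpha_i\,\nabla_j H^\alpha)$. The key input is the orthogonality of the mean curvature vector to the tangent spaces, i.e. $\sum_\alpha H^\alpha X^\alpha_i=0$ for every $i$. Differentiating this relation covariantly in $x^j$ and using $\nabla_j X^\alpha_i=h^\alpha_{ij}$ yields $\sum_\alpha\nabla_j H^\alpha\, X^\alpha_i=-\sum_\alpha H^\alpha h^\alpha_{ij}$; symmetrizing in $i,j$ and summing the two contributions produces exactly $-2\sum_\alpha H^\alpha h^\alpha_{ij}$.

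For \emph{part 3)} I would invoke the general first-variation formula for the Levi--Civita connection: for any metric variation $\partial_t g_{ij}=v_{ij}$ one has $\partial_t\Gamma^k_{ij}=\tfrac12 g^{kl}(\nabla_i v_{jl}+\nabla_j v_{il}-\nabla_l v_{ij})$, which is verified by differentiating $\Gamma^k_{ij}=\tfrac12 g^{kl}(\partial_i g_{jl}+\partial_j g_{il}-\partial_l g_{ij})$ in time, noting that $\partial_t\Gamma^k_{ij}$ is a tensor and evaluating in normal coordinates. Substituting $v_{ij}=-2\sum_\alpha H^\alpha h^\alpha_{ij}$ from part 2) gives the stated formula. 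For \emph{part 4)} I would differentiate $h^\alpha_{ij}=\partial_i\partial_j X^\alpha-\Gamma^k_{ij}X^\alpha_k$ in time; commuting $\partial_t$ through $\partial_i\partial_j$ turns $\partial_i\partial_j X^\alpha$ into $\partial_i\partial_j H^\alpha$, while part 1) replaces $\partial_t X^\alpha_k$ by $\partial_k H^\alpha$. The terms $\partial_i\partial_j H^\alpha-\Gamma^k_{ij}\partial_k H^\alpha$ then assemble into the Hessian $\nabla_i\nabla_j H^\alpha$, leaving precisely the remaining term $-\partial_t\Gamma^k_{ij}\cdot X^\alpha_k$.

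I do not expect any genuine obstacle here: the whole computation is clean precisely because working in the fixed ambient coordinates renders $X^\alpha$ and $H^\alpha$ scalar, so the usual care required when differentiating tensor fields along a moving metric is sidestepped. The only two points deserving attention are the orthogonality identity $\sum_\alpha H^\alpha X^\alpha_i=0$ that drives part 2), and the correct bookkeeping of Christoffel terms in the standard connection-variation formula of part 3); everything else is routine commutation of derivatives.
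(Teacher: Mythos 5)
Your proof is correct in all four parts. The paper actually states this proposition without any proof, so there is nothing to compare against; your argument supplies the standard derivation the paper implicitly relies on: commuting $\partial_t$ with coordinate derivatives for 1), the orthogonality $\sum_\alpha H^\alpha X^\alpha_i=0$ of the mean curvature vector to the tangent space together with $\nabla_j X^\alpha_i=h^\alpha_{ij}$ for 2), the first-variation formula for the Levi--Civita connection for 3), and the Hessian bookkeeping $\partial_i\partial_j H^\alpha-\Gamma^k_{ij}\partial_k H^\alpha=\nabla_i\nabla_j H^\alpha$ for 4). Each step is justified, and the key geometric input (that $H$ is normal-valued, so $\sum_\alpha H^\alpha X^\alpha_i=0$) is correctly identified and correctly used.
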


For the further computation, we need the Simon's identity
\begin{lemma}
\begin{align*}
\nabla_i \nabla_j H^\alpha =& \Delta h^\alpha_{ij} - g^{pq}\Big(\nabla_iR_{jp}+\nabla_jR_{ip}-\nabla_pR_{ij}\Big)X^\alpha_q \\
                                            &  \qquad +2g^{kp}g^{lq}R_{ikjl}h^\alpha_{pq} - g^{pq}R_{ip}h^\alpha_{jq} -g^{pq}R_{jp}h^\alpha_{iq} .
\end{align*}
where the curvature tensor $R_{ijkl}$  can be obtain from the Gauss equation $$R_{ijkl}=\sum\limits_\alpha \Big(h^\alpha_{ik}h^\alpha_{jl} -h^\alpha_{il}h^\alpha_{jk}\Big),$$
and the Ricci curvature tensor
$R_{ij}=g^{kl}R_{ikjl}=\sum\limits_\alpha \Big(H^\alpha h^\alpha_{ij} - g^{kl}h^\alpha_{ik}h^\alpha_{jl}\Big) $.

\end{lemma}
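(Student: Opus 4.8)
The plan is to exploit the fact that, in the fixed Euclidean coordinates $\{y^\alpha\}$, each component $X^\alpha$ is simply a scalar function on $(M^m,g)$. By the formulas of Section 2 this means
$$X^\alpha_i=\nabla_i X^\alpha,\qquad h^\alpha_{ij}=\nabla_i\nabla_j X^\alpha,\qquad H^\alpha=g^{ij}h^\alpha_{ij}=\Delta X^\alpha,$$
so that $h^\alpha$ is nothing but the Hessian of the scalar $X^\alpha$ and $H^\alpha$ its Laplacian. Consequently both sides of the asserted identity are fourth covariant derivatives of one and the same function: since $\nabla g=0$,
$$\nabla_i\nabla_j H^\alpha=g^{pq}\nabla_i\nabla_j\nabla_p\nabla_q X^\alpha,\qquad \Delta h^\alpha_{ij}=g^{pq}\nabla_p\nabla_q\nabla_i\nabla_j X^\alpha,$$
and the Simons-type identity becomes equivalent to the purely intrinsic commutator formula
$$g^{pq}\big(\nabla_i\nabla_j\nabla_p\nabla_q-\nabla_p\nabla_q\nabla_i\nabla_j\big)X^\alpha=(\text{curvature terms}).$$

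First I would reduce everything to third-order commutators. Writing $\omega_q=\nabla_q X^\alpha$ and $h^\alpha_{pq}=\nabla_p\omega_q$, the key relation is the Codazzi-type identity obtained by commuting two derivatives on the covector $\omega$, namely $\nabla_p h^\alpha_{ij}-\nabla_i h^\alpha_{pj}=(\nabla_p\nabla_i-\nabla_i\nabla_p)\omega_j=-R_{pij}{}^{s}\omega_s$, with the sign normalized to the convention fixed by the Gauss equation of the lemma. I would then apply the Ricci commutation identity $(\nabla_a\nabla_b-\nabla_b\nabla_a)T_{\cdots}=-\sum R_{ab\bullet}{}^{s}T_{\cdots s\cdots}$ repeatedly on the $(0,1)$- and $(0,2)$-tensors $\omega$ and $h^\alpha$ to move the pair $(\nabla_p,\nabla_q)$ from the inside to the outside. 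Each interchange produces a term of schematic type $R\ast h^\alpha$ (Riemann times Hessian) together with, whenever a derivative crosses the Hessian, a term of type $(\nabla R)\ast\omega$ (a derivative of the Riemann tensor contracted with $X^\alpha_\bullet$).

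After contracting with $g^{pq}$ I would collect the algebraic $R\ast h^\alpha$ terms. Using the symmetries of $R_{ijkl}$ and the definition $R_{ij}=g^{kl}R_{ikjl}$, the commutators of the form $-g^{pq}R_{qij}{}^{s}h^\alpha_{ps}$ assemble into the full-Riemann term $2g^{kp}g^{lq}R_{ikjl}h^\alpha_{pq}$, while those of the form $-g^{pq}R_{qip}{}^{s}h^\alpha_{sj}$ trace down to the Ricci terms $-g^{pq}R_{ip}h^\alpha_{jq}-g^{pq}R_{jp}h^\alpha_{iq}$. For the derivative-of-curvature terms, the crucial observation is that the contractions $g^{pq}\nabla_i R_{qpj}{}^{s}$ vanish by antisymmetry, so only divergence-type expressions $g^{pq}\nabla_q R_{pij}{}^{s}$ survive; to these I would apply the once-contracted second Bianchi identity $g^{pq}\nabla_p R_{qijt}=\nabla_i R_{jt}-\nabla_j R_{it}$ (together with the uncontracted $\nabla_pR_{ijq}{}^{s}+\nabla_iR_{jpq}{}^{s}+\nabla_jR_{piq}{}^{s}=0$) to convert them into exactly $-g^{pq}\big(\nabla_iR_{jp}+\nabla_jR_{ip}-\nabla_pR_{ij}\big)X^\alpha_q$.

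The main obstacle is purely computational bookkeeping: tracking signs and indices through the several commutations and verifying that the divergence-of-Riemann terms reassemble into precisely the stated symmetric Ricci combination. There is no conceptual difficulty—the whole advantage of writing $h^\alpha_{ij}=\nabla_i\nabla_j X^\alpha$ is that the usual extrinsic Simons identity collapses into an identity for fourth derivatives of a scalar—but the sign conventions fixed by $R_{ijkl}=\sum_\alpha(h^\alpha_{ik}h^\alpha_{jl}-h^\alpha_{il}h^\alpha_{jk})$ and $R_{ij}=g^{kl}R_{ikjl}$ must be respected throughout, so that the coefficient $2$ on the $R_{ikjl}h^\alpha_{pq}$ term and the signs of the Ricci terms come out as stated.
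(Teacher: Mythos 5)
The paper itself offers no proof of this lemma: it is quoted as ``the Simons identity'' and used as a known classical fact, so there is no argument of the author's to compare yours against. On its own merits, your proposal is correct and is the standard intrinsic derivation. The essential observation --- that in the fixed Euclidean coordinates $h^\alpha_{ij}=\nabla_i\nabla_jX^\alpha$ is literally the Hessian of the scalar $X^\alpha$ and $H^\alpha=\Delta X^\alpha$ its Laplacian --- is exactly what the paper's setup in Section 2 provides, and it turns the lemma into the purely intrinsic commutation formula for $\nabla_i\nabla_j\Delta u-\Delta\nabla_i\nabla_j u$ on a Riemannian manifold. Your three ingredients (the Codazzi-type identity $\nabla_ph^\alpha_{ij}-\nabla_ih^\alpha_{pj}=[\nabla_p,\nabla_i]\nabla_jX^\alpha$, the Ricci commutation identities on $\nabla X^\alpha$ and $h^\alpha$, and the once-contracted second Bianchi identity) are precisely what is needed, and your accounting of where each term of the final formula comes from ($R\ast h$ commutator terms assembling into $2g^{kp}g^{lq}R_{ikjl}h^\alpha_{pq}$ minus the two Ricci contractions, and the derivative-of-curvature terms assembling into $-g^{pq}(\nabla_iR_{jp}+\nabla_jR_{ip}-\nabla_pR_{ij})X^\alpha_q$) is accurate. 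Two small remarks: first, you correctly note that the Gauss equation plays no role in the proof itself --- it only serves, as the lemma's wording suggests, to identify the intrinsic curvature of $g$ with the quadratic expression in $h^\alpha$ afterwards; second, your bookkeeping of the $\nabla R$ terms is slightly coarse (some of the $\nabla_i R_{jp}$-type terms arise directly by differentiating a Ricci contraction $g^{pq}R_{jpq}{}^s$ rather than through Bianchi, with only the divergence terms $g^{pq}\nabla_pR_{iqj}{}^s$ requiring the contracted Bianchi identity), but this is exactly the kind of sign-and-index bookkeeping you flag, and it does not affect the soundness of the plan.
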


\section{The PDE-ODE system}

Let $X(t)$ and $\tilde{X}(t)$ be two complete solutions of mean curvature flow on $M$ for $t\in[0,T]$, 
which induce two family of functions $X^\alpha$ and $\tilde{X}^\alpha$, two Riemannian metrics $g(t)$ and $\tilde{g}(t)$. Hence by Lemma 2.1, we have two family of tensors $h^\alpha$ and $\tilde{h}^\alpha$.  
Consequnesly, we denote by $\nabla$ and $\tilde{\nabla}$ their Levi-Civita connections, and by $R$ and $\tilde{R}$ their Riemannian curvature tensors. 

Fixed the metric $g(t)$ on $M$. We introduce the tensor field  $d\doteqdot g-\tilde{g}$, $N\doteqdot\nabla-\tilde{\nabla}$, $W\doteqdot\nabla N$,.

On the other hand, for any fixed $\alpha$, we denote by $w^\alpha \doteqdot \nabla X^\alpha-\tilde{\nabla}\tilde{X}^\alpha$, $U^\alpha\doteqdot h^\alpha-\tilde{h}^\alpha$, and $V^\alpha\doteqdot\nabla h^\alpha-\tilde{\nabla}\tilde{h}^\alpha$. Let $Y(t)=U^1(t)\oplus\cdots\oplus U^{m+n}(t)\oplus  V^1(t)\oplus\cdots\oplus V^{m+n}(t) $ and $Z(t)=w^1(t)\oplus \cdots \oplus w^{m+n}(t) \oplus d(t)\oplus N(t) \oplus W(t)$. Then we have the following PDE-ODE system.

\begin{theorem}
Under the hypothesis of Theorem 1.1. Then for any $0<\delta<T$, there exist a constant $C=C(\delta, T, K, \tilde{K})>0$ such that

\begin{align}
 \bigg|\bigg(\frac{\partial}{\partial t}-\Delta_{g(t)}\bigg)Y\bigg|^2_{g(t)} & \leq C\big(|Y|^2_{g(t)}+|\nabla Y|^2_{g(t)}+|Z|^2_{g(t)}\big),  \\
 \bigg|\frac{\partial}{\partial t}Z\bigg|^2_{g(t)} & \leq C\big(|Y|^2_{g(t)}+|\nabla Y|^2_{g(t)}+|Z|^2_{g(t)}\big)
\end{align}
on $M\times [\delta,T]$.
\end{theorem}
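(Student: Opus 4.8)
The plan is to derive the two inequalities term by term from the evolution equations of Proposition 2.2 and the Simons-type identity of Lemma 2.3, after first upgrading the hypotheses $|h|_g\le K$, $|\tilde h|_{\tilde g}\le\tilde K$ to bounds on all the derivatives that will actually appear. The restriction to $M\times[\delta,T]$ and the dependence $C=C(\delta,T,K,\tilde K)$ signal that the first step should be to invoke the interior smoothing estimates for the mean curvature flow (of Ecker--Huisken / Shi type): from $|h|\le K$ on $[0,T]$ one obtains $|\nabla^p h|_g\le C_p(\delta,K)$ on $M\times[\delta,T]$ for $p=1,2$, and likewise for $\tilde h$. Through the Gauss equation $R_{ijkl}=\sum_\alpha(h^\alpha_{ik}h^\alpha_{jl}-h^\alpha_{il}h^\alpha_{jk})$ these control $R,\nabla R,\nabla^2R$ and their tilde analogues uniformly, while the first derivatives of position are automatically bounded since $\sum_\alpha g^{ij}X^\alpha_iX^\alpha_j=g^{ij}g_{ij}=m$. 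These uniform bounds are exactly what permit the lower-order terms below to be absorbed into the right-hand sides with a constant depending on $\delta$.

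For the parabolic estimate, I would combine Proposition 2.2(4) with Lemma 2.3 to write the $X$-flow as a heat equation of the schematic form $(\partial_t-\Delta_g)h^\alpha=F^\alpha$, where $F^\alpha$ is a fixed contraction of $g^{-1}$, $R$, $\nabla R$, $h$, and the bounded factors $X^\alpha_q$; the $\tilde X$-flow satisfies $(\partial_t-\tilde\Delta)\tilde h^\alpha=\tilde F^\alpha$. Writing $\Delta_g=\tilde\Delta+(\Delta_g-\tilde\Delta)$ and subtracting,
\begin{equation*}
(\partial_t-\Delta_g)U^\alpha=(F^\alpha-\tilde F^\alpha)+(\Delta_g-\tilde\Delta)\tilde h^\alpha .
\end{equation*}
Each piece is then bounded pointwise. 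The operator difference $\Delta_g-\tilde\Delta$ is first order with coefficients assembled from $d=g-\tilde g$ and $N=\nabla-\tilde\nabla$ (and $W=\nabla N$), so applied to the bounded tensor $\tilde h$ it is $\le C|Z|$. The forcing difference telescopes: $R-\tilde R$ is quadratic in $h,\tilde h$ by the Gauss equation and hence $\le C|U^\alpha|\le C|Y|$, the term $\nabla R-\tilde\nabla\tilde R$ is controlled by $V,U,N$, the factors $X^\alpha_q$ are bounded with differences $w^\alpha$, and $g^{-1}-\tilde g^{-1}$ is controlled by $d$; altogether $|F^\alpha-\tilde F^\alpha|\le C(|Y|+|Z|)$. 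For $V^\alpha=\nabla h^\alpha-\tilde\nabla\tilde h^\alpha$ I commute $\nabla$ with the heat operator: the commutators $[\partial_t,\nabla]$ and $[\Delta_g,\nabla]$ generate curvature and $\partial_t\Gamma$ terms multiplying $h$ and $\nabla h$ (bounded), while $\nabla F^\alpha$ raises the order by one, and after subtracting the tilde version every resulting term lies in $C(|Y|+|\nabla Y|+|Z|)$. Squaring and summing over $\alpha$ gives the first inequality.

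For the ODE estimate, each summand of $Z$ has a time derivative that is only first order in the curvature data. From Proposition 2.2(2), $\partial_td=-2\sum_\alpha(H^\alpha h^\alpha-\tilde H^\alpha\tilde h^\alpha)$, and since $H^\alpha-\tilde H^\alpha=g^{ij}h^\alpha_{ij}-\tilde g^{ij}\tilde h^\alpha_{ij}$ is governed by $U$ and $d$, this is $\le C(|Y|+|Z|)$. The quantity $\partial_tN=\partial_t\Gamma-\partial_t\tilde\Gamma$ is, by Proposition 2.2(3), a contraction of the inverse metric with $\nabla(\sum_\alpha H^\alpha h^\alpha)$; differencing produces $V$ (from the $\nabla h$ factor) together with $U,N,d$, hence $\le C(|Y|+|Z|)$. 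For the functions $X^\alpha$ one has $w^\alpha_i=X^\alpha_i-\tilde X^\alpha_i$ and, by Proposition 2.2(1), $\partial_tw^\alpha_i=\partial_i(H^\alpha-\tilde H^\alpha)$, again controlled by $V,U,d,N$. Finally $\partial_tW=\partial_t\nabla N=\nabla\partial_tN+(\partial_t\Gamma)*N$; the first term places one derivative on $\partial_tN$, i.e. on $\nabla(\sum H h)$, producing terms of the size of $\nabla V\sim\nabla Y$, so $\partial_tW\le C(|Y|+|\nabla Y|+|Z|)$. Squaring and summing yields the second inequality.

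The main obstacle is the bookkeeping in the parabolic step: verifying that both $F^\alpha-\tilde F^\alpha$ and $(\Delta_g-\tilde\Delta)\tilde h^\alpha$ contain only the admissible quantities $Y,\nabla Y,Z$ with uniformly bounded coefficients. The delicate points are handling the curvature and covariant-derivative-of-curvature differences through the Gauss equation, and systematically converting every $\tilde\nabla$ into $\nabla$ at the cost of an $N$ (or $W$) factor, so that no stray $\tilde\nabla\tilde h$ or $\tilde\nabla^2\tilde h$ survives unpaired. The conceptual crux is that this absorption is possible only because the interior estimates furnish uniform bounds on $\nabla h,\nabla^2h,R,\nabla R,\nabla^2R$; this is precisely why $C$ must depend on $\delta$ and the estimate is stated away from $t=0$.
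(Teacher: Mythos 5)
Your overall strategy --- interior (Bernstein/Shi-type) derivative estimates on $[\delta,T]$, followed by termwise differencing of the evolution equations and absorption via Cauchy--Schwarz --- is the same as the paper's, and your PDE decomposition $(\partial_t-\Delta_g)U^\alpha=(F^\alpha-\tilde F^\alpha)+(\Delta_g-\tilde\Delta)\tilde h^\alpha$ is exactly the one used there. But there is a genuine gap: you never establish that the components of $Z$ --- in particular $N=\nabla-\tilde\nabla$ and $W=\nabla N$ --- are themselves uniformly bounded on $M\times[\delta,T]$, nor that $g$ and $\tilde g$ are uniformly equivalent, and neither fact follows from the interior estimates you invoke. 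Both require the terminal hypothesis $X(\cdot,T)=\tilde X(\cdot,T)$, which your argument never uses. The paper obtains them by noting that $g(T)=\tilde g(T)$, hence $N(T)=0$ and $W(T)=0$, and integrating the (bounded) time derivatives backwards in time, $|N|_{g(t)}\le\int_t^T\big|\partial_s N\big|\,ds\le C'T$, and similarly for $W$; the uniform equivalence $\gamma^{-1}g\le\tilde g\le\gamma g$ likewise rests on the coincidence of the two solutions at $t=T$ together with the bounded time derivatives of the metrics. For two unrelated complete solutions these quantities can be unbounded, so this step cannot be skipped.

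This matters because converting every $\tilde\nabla$ into $\nabla$ unavoidably produces terms \emph{quadratic} in the $Z$-components: for instance $(\Delta_g-\tilde\Delta)\tilde h^\alpha$ contains $N\ast N\ast\tilde h^\alpha$ and $\tilde g^{-1}\ast d\ast\tilde\nabla^2\tilde h^\alpha$ pieces, and $d\ast W\ast\tilde h^\alpha$ pieces once second $\nabla$-derivatives of $\tilde h$ are re-expressed through $\tilde\nabla^2\tilde h$. A pointwise bound $|N\ast N\ast\tilde h|\le C|Z|$ is false unless $|N|\le C$ a priori; the same objection applies to your claim that the operator difference applied to $\tilde h$ is $\le C|Z|$, and to statements such as ``$g^{-1}-\tilde g^{-1}$ is controlled by $d$'' and ``the bounded tensor $\tilde h$'' measured in the $g$-norm, which presuppose metric equivalence. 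Once these a priori bounds on $d,N,W,w^\alpha$ and the equivalence of $g,\tilde g$ are inserted --- which is precisely the content of the paper's proof of this theorem, layered on top of the schematic evolution equations you re-derive --- your term-by-term bookkeeping goes through. Two smaller points: you also need third-order derivative bounds on $\tilde h$ (the term $\tilde g^{-1}\ast d\ast\tilde\nabla^3\tilde h^\alpha$ appears in the $V^\alpha$ equation), not just $p=1,2$; and the ``conceptual crux'' is therefore not only the interior smoothing estimates but equally the backward integration from the terminal coincidence.
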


Let us denote by  $A\ast B$ any tensor product of two tensors $A$ and $B$ by $g$ when we do not need the precise expression.
For example, since $g^{ij}-\tilde{g}^{ij}=-\tilde{g}^{ia} g^{ja} d_{ab}$, and
\begin{align*}
\frac{\partial}{\partial t} d_{ij} &=-2\sum\limits_\alpha  \bigg(  g^{kl}h_{kl}^{\alpha}h_{ij}^{\alpha} -  \tilde{g}^{kl} \tilde{h}_{kl}^{\alpha} \tilde{h}_{ij}^{\alpha} \bigg) \\
  &=-2\sum\limits_\alpha \bigg(   g^{kl}h_{kl}^{\alpha} (h_{ij}^{\alpha} -\tilde{h}_{ij}^{\alpha} )+ g^{kl}(h_{kl}^{\alpha}-\tilde{h}_{kl}^{\alpha})\tilde{h}_{ij}^{\alpha}  + (g^{kl}-\tilde{g}^{kl}) \tilde{h}_{kl}^{\alpha} \tilde{h}_{ij}^{\alpha}    \bigg), \\
\end{align*}
so $g^{-1}-\tilde{g}^{-1}= \tilde{g}^{-1}\ast d$, and 
$$\frac{\partial}{\partial t} d=\sum\limits_\alpha \bigg( h^\alpha \ast U^\alpha +  \tilde{h}^\alpha\ast U^\alpha+\tilde{g}^{-1}\ast d\ast \tilde{h}^\alpha \ast \tilde{h}^\alpha \bigg).$$

Furthermore, we denote by  eeeeee3ee a linear combination of tensors $A \ast C$, $B \ast B \ast C$, etc. Hence
$$g^{-1}-\tilde{g}^{-1}=\mathcal{L}(d; \quad \tilde{g}^{-1}), \qquad \frac{\partial}{\partial t} d=\mathcal{L}(d, U^{\alpha};\quad \tilde{g}^{-1}, h^{\alpha}, \tilde{h}^{\alpha}).$$

Now we will give the evolutions of the component of $Y$ and $Z$.
\begin{lemma}
If $g$, $\tilde{g}$, $d$, $N$, $W$, $w^\alpha$, $U^\alpha$, and $V^\alpha$ as above, then
\begin{align*}
 1)\quad & \frac{\partial}{\partial t} w^\alpha=\mathcal{L}(d, V^\alpha ; \quad  \tilde{g}^{-1}, \tilde{\nabla}\tilde{h}^\alpha) , \\
 2)\quad & \frac{\partial}{\partial t} d=\mathcal{L}(d, U^{\alpha}; \quad \tilde{g}^{-1}, h^{\alpha}, \tilde{h}^{\alpha}), \\
 3)\quad & \frac{\partial}{\partial t} N= \mathcal{L}(d, U^{\alpha}, V^\alpha; \quad \tilde{g}^{-1}, h^{\alpha}, \tilde{h}^{\alpha}, \tilde{\nabla} \tilde{h}^ \alpha ),\\
 4)\quad & \frac{\partial}{\partial t} W= \mathcal{L}(d, N, U^{\alpha}, V^\alpha;      \quad
                        \tilde{g}^{-1}, h^{\alpha}, \nabla h^\alpha, \nabla^2 h^\alpha,  \tilde{h}^{\alpha}, \tilde{\nabla} \tilde{h}^ \alpha, \tilde{\nabla} ^2 \tilde{h}^ \alpha),\\
 5)\quad & \bigg(\frac{\partial}{\partial t} - \Delta \bigg) U^\alpha= \mathcal{L}(d, w^\alpha, N, W, U^{\alpha}, V^\alpha;      \quad
                  \tilde{g}^{-1}, \tilde{\nabla}\tilde{X}^\alpha, h^{\alpha}, \nabla h^\alpha,  \tilde{h}^{\alpha}, \tilde{\nabla} \tilde{h}^ \alpha, \tilde{\nabla} ^2 \tilde{h}^ \alpha),\\
 6)\quad & \bigg(\frac{\partial}{\partial t} - \Delta \bigg)  V^\alpha= \mathcal{L}(d, w^\alpha, N, W, U^{\alpha}, V^\alpha;      \quad
                  \tilde{g}^{-1}, \tilde{\nabla}\tilde{X}^\alpha, h^{\alpha}, \nabla h^\alpha, \nabla^2 h^\alpha, \tilde{h}^{\alpha}, \tilde{\nabla} \tilde{h}^ \alpha, \tilde{\nabla} ^2 \tilde{h}^ \alpha, \tilde{\nabla} ^3 \tilde{h}^ \alpha).\\
 \end{align*}
\end{lemma}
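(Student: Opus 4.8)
The plan is to prove each of 1)--6) by a direct computation: subtract the evolution equation of each tilded object from that of its untilded counterpart, using Proposition 2.2 (and, for 5) and 6), the Simons identity of Lemma 2.3), and then reorganise the outcome into the normal form $\mathcal{L}(\,\cdot\,;\,\cdot\,)$. Everything rests on three algebraic reduction rules. The first is the inverse-metric rule already recorded in the text, $g^{-1}-\tilde g^{-1}=\mathcal{L}(d;\tilde g^{-1})$, which replaces any $g^{-1}$ by $\tilde g^{-1}$ at the cost of an $\mathcal{L}(d;\cdots)$ error. The second is that the difference of the two connections is a tensor acting algebraically,
\[
(\nabla-\tilde\nabla)S=N\ast S,\qquad (\nabla^2-\tilde\nabla^2)S=W\ast S+N\ast\tilde\nabla S+N\ast N\ast S,
\]
for any tensor $S$; this is the channel through which $N$ and $W=\nabla N$ enter. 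The third, which makes the system close, is that a covariant derivative landing on a component of $Z$ is reabsorbed into $Y$ and $Z$:
\[
\nabla h^\alpha=V^\alpha+\tilde\nabla\tilde h^\alpha,\qquad \nabla U^\alpha=V^\alpha+N\ast\tilde h^\alpha,\qquad \nabla d=N\ast\tilde g,
\]
together with their once-differentiated forms, which reproduce $W$, $\nabla^2h^\alpha$ and $\tilde\nabla^2\tilde h^\alpha$. Hence every $\nabla$ hitting a difference tensor remains inside the pool $\{Y,\nabla Y,Z\}$ while raising the order of a background coefficient by one.

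With these rules the first-order identities 1)--4) are largely mechanical. For 1), since $X^\alpha$ is a scalar one has $\partial_tw^\alpha=\nabla(H^\alpha-\tilde H^\alpha)$; differentiating $H^\alpha-\tilde H^\alpha=g^{pq}h^\alpha_{pq}-\tilde g^{pq}\tilde h^\alpha_{pq}$ and splitting off $V^\alpha$ yields exactly $\mathcal{L}(d,V^\alpha;\tilde g^{-1},\tilde\nabla\tilde h^\alpha)$, with no $N$ surviving. Identity 2) is the computation already displayed in the text. Both 3) and 4) come from the $\partial_t\Gamma$ formula of Proposition 2.2(3): subtracting the two copies and pushing the inverse-metric and connection differences through the reduction rules, the $N$ produced by the connection difference $\mathrm{Sym}(N\ast\partial_t g)$ cancels against the $N$ hidden in $\tilde\nabla(\partial_t d)$ through $\tilde\nabla U^\alpha=V^\alpha+N\ast h^\alpha$, leaving only $d,U^\alpha,V^\alpha$ in 3); differentiating once more to form $W=\nabla N$ then gives 4), where $N$ now survives and the background order rises to $\nabla^2h^\alpha,\tilde\nabla^2\tilde h^\alpha$.

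Identities 5) and 6) are the substantive part. For 5) I start from Proposition 2.2(4) and substitute the Simons identity (Lemma 2.3) to trade $\nabla_i\nabla_jH^\alpha$ for $\Delta_g h^\alpha_{ij}$ plus curvature terms, so that $(\partial_t-\Delta_g)h^\alpha$ becomes a \emph{first}-order expression whose coefficients are built, via the Gauss equation $R=h\ast h$, from $g^{-1},h^\alpha,\nabla h^\alpha$ and $\nabla X^\alpha$. Doing the same for $\tilde h^\alpha$ and writing
\[
(\partial_t-\Delta_g)U^\alpha=\big[(\partial_t-\Delta_g)h^\alpha-(\partial_t-\Delta_{\tilde g})\tilde h^\alpha\big]-(\Delta_{\tilde g}-\Delta_g)\tilde h^\alpha
\]
isolates the Laplacian difference $(\Delta_{\tilde g}-\Delta_g)\tilde h^\alpha=\mathcal{L}(d,N,W;\tilde g^{-1},\tilde h^\alpha,\tilde\nabla\tilde h^\alpha,\tilde\nabla^2\tilde h^\alpha)$, the source of the $N,W$ and $\tilde\nabla^2\tilde h^\alpha$ entries; the bracket is handled by differencing the two Simons expressions, where the Gauss equation turns $R-\tilde R$ into $\mathcal{L}(U^\alpha;h^\alpha,\tilde h^\alpha)$ and $X^\alpha_q-\tilde X^\alpha_q=w^\alpha$ produces the $w^\alpha$ term. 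For 6) I differentiate the $h^\alpha$-equation once more, commuting $\nabla$ past $\partial_t-\Delta_g$ via $[\partial_t,\nabla]=\partial_t\Gamma\ast(\cdot)$ and $[\Delta_g,\nabla]=R\ast\nabla(\cdot)+\nabla R\ast(\cdot)$; this is what raises the background order to $\nabla^2h^\alpha,\tilde\nabla^3\tilde h^\alpha$, after which the same reduction rules collapse the result into the stated form.

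I expect the main obstacle to be the sheer bookkeeping needed to close the system with \emph{exactly} the listed quantities, and in particular two points. First, the nontrivial cancellations illustrated by 3): many $N$- and $W$-terms that naively appear must be shown to cancel or to be reabsorbed, and keeping track of their contractions and signs is where the ``long computation'' of the introduction lies. Second, closure at top order in 5) and especially 6): a priori the difference of the second- (resp.\ third-) order terms of the two flows could leave an uncontrolled principal part, i.e.\ a surviving $\nabla^2(\text{difference})$ outside $\{Y,\nabla Y,Z\}$. The point to verify is that, once the Laplacian has been normalised to the single operator $\Delta_g$, the principal symbols of the two evolutions agree, so that the only top-order object remaining is $\Delta_g U^\alpha$ (resp.\ $\Delta_g V^\alpha$), which is moved to the left-hand side, while every other highest-order term is a covariant derivative of a \emph{background} tensor contracted with a difference tensor of order at most one. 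Establishing this, so that the six right-hand sides are genuinely of the form $\mathcal{L}(Y,\nabla Y,Z;\text{background data})$, is the crux; the third-order accounting for $V^\alpha$ is the most delicate step.
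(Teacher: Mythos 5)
Your proposal is correct and follows essentially the same route as the paper: subtract the two flows' evolution equations, reduce via $g^{-1}-\tilde g^{-1}=\tilde g^{-1}\ast d$ and the connection-difference rules $\tilde\nabla A-\nabla A=N\ast A$, $\tilde\nabla\tilde\nabla A=\nabla\nabla A+N\ast\tilde\nabla A+W\ast A+N\ast N\ast A$, and for 5)--6) use the Simons identity with the Gauss equation to put $(\partial_t-\Delta)h^\alpha$ in reaction form and then the same three-term decomposition isolating $(\tilde\Delta-\Delta)\tilde h^\alpha$ (resp. $(\tilde\Delta-\Delta)\tilde\nabla\tilde h^\alpha$). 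Your identification of the top-order closure as the crux, handled by normalising to the single operator $\Delta_{g}$ so that only $\Delta_g U^\alpha$, $\Delta_g V^\alpha$ survive at highest order, is exactly what the paper's displayed formulas accomplish.
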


\begin{proof}
Since
$$g^{-1}-\tilde{g}^{-1}=\tilde{g}^{-1}\ast d,$$
then by Proposition 2.2, we can get 1),  2) and 3).

Note that $\frac{\partial}{\partial t}\nabla N=\nabla \frac{\partial}{\partial t}N+\frac{\partial}{\partial t}\Gamma\ast N$ and $\frac{\partial}{\partial t}\Gamma= h^\alpha\ast \nabla h^\alpha$.
Then by the fact that $\tilde{\nabla} A - \nabla A = N \ast A$ for any tensor $A$, we obtain 4).

For 5) and 6),  from Proposition 2.2, 
$$\bigg(\frac{\partial}{\partial t} - \Delta \bigg) h^\alpha=\sum\limits_\beta \bigg( h^\beta\ast \nabla h^\beta\ast \nabla X^\alpha + h^\beta\ast h^\beta\ast h^\alpha \bigg).$$

Now
$$\bigg(\frac{\partial}{\partial t} - \Delta \bigg) U^\alpha=\bigg(\frac{\partial}{\partial t} - \Delta \bigg) h^\alpha -\bigg(\frac{\partial}{\partial t} - \tilde{\Delta} \bigg) \tilde{h}^\alpha - \bigg(\tilde{\Delta}- \Delta \bigg) \tilde{h}^\alpha,$$
and since $\tilde{\nabla}\tilde{\nabla}A=\nabla \nabla A + N \ast \tilde{\nabla} A + W \ast A  +  N \ast N \ast A $, we have
$$\tilde{\Delta}\tilde{h}^\alpha=\Delta \tilde{h}^\alpha+  N \ast \tilde{\nabla} \tilde{h}^\alpha + W \ast \tilde{h}^\alpha +  N \ast N \ast \tilde{h}^\alpha + \tilde{g}^{-1}\ast d\ast\tilde{\nabla}\tilde{\nabla}\tilde{h}^\alpha,$$
hence we can get 5).

On the other hand, 
$$\bigg(\frac{\partial}{\partial t} - \Delta \bigg) V^\alpha=\bigg(\frac{\partial}{\partial t} - \Delta \bigg) \nabla h^\alpha -\bigg(\frac{\partial}{\partial t} - \tilde{\Delta} \bigg) \tilde{\nabla}\tilde{h}^\alpha - \bigg(\tilde{\Delta}- \Delta \bigg)\tilde{\nabla}\tilde{h}^\alpha.$$
Note that by the Gauss equation, we obtain
\begin{align*}
 \frac{\partial}{\partial t} \nabla h^\alpha &=\nabla  \frac{\partial}{\partial t}h^\alpha + \frac{\partial}{\partial t}\Gamma \ast h^\alpha \\
                &=\nabla (\Delta h^\alpha + h^\beta\ast \nabla h^\beta\ast \nabla X^\alpha + h^\beta\ast h^\beta\ast h^\alpha) + h^\beta \ast \nabla{h}^\beta \ast h^\alpha \\
                &=\Delta \nabla h^\alpha  + \nabla h^\beta\ast \nabla h^\beta\ast \nabla X^\alpha + h^\beta\ast \nabla \nabla h^\beta\ast \nabla X^\alpha \\
                &\qquad  + h^\beta \ast \nabla{h}^\beta \ast h^\alpha+ h^\beta\ast h^\beta\ast \nabla h^\alpha.
\end{align*} 

On the other hand, since $\tilde{\nabla}\tilde{\nabla}A=\nabla \nabla A + N \ast \tilde{\nabla} A + W \ast A  +  N \ast N \ast A $, we have
 $$\tilde{\Delta}\tilde{\nabla}\tilde{h}^\alpha=\Delta \tilde{\nabla}\tilde{h}^\alpha 
+ N \ast \tilde{\nabla} \tilde{\nabla}\tilde{h}^\alpha + W \ast \tilde{\nabla}\tilde{h}^\alpha +  N \ast N \ast  \tilde{\nabla}\tilde{h}^\alpha+ \tilde{g}^{-1}\ast d\ast\tilde{\nabla}^{3}\tilde{h}^\alpha.$$

Combine the above equations, we can get 6).
\end{proof}

Now follow a similar argument of Kotschwar \cite{K10}, we can prove Theorem 3.1.
\begin{proof}
By the stander Berstein estimate of MCF, once $h$ and $\tilde{h}$ are uniformly bounded, all higher derivatives $\nabla^{(k)} h$ and $\tilde{\nabla}^{(k)} \tilde{h}$ are uniformly bounded. 
This can be shown by induction and originally prove for hypersurfaces by Huisken \cite{Hui84}. Thus for any $0<\delta<T$ and $k\ge 0$, there exist constant $C_k$ and $\tilde{C}_k$, such that
$$|\nabla^{(k)} h|_{g(t)}\le C_k, \qquad and\qquad |\tilde{\nabla}^{(k)} \tilde{h}|_{\tilde{g}(t)}\le \tilde{C}_k$$
on $t\in[\delta, T]$. On the other hand, by the Gauss equation, the curvature $R$ and $\tilde{R}$ are all bounded, which implies the uniformly equivalent of $g$ and $\tilde{g}$, that is, 
there exist constants $\gamma$, such that
$$\gamma^{-1}g\le\tilde{g}\le\gamma g$$
on $t\in [0, T]$. So $ |\tilde{\nabla}^{(k)} \tilde{h}|_{g(t)}\le \tilde{C}_k$ for some new $\tilde{C}_k$.  

Since $$\sum_\alpha|\nabla X^\alpha|^2_g(t)=\sum_\alpha g^{ij}X_i^\alpha X_j^\alpha=g^{ij}\sum_\alpha X_i^\alpha X_j^\alpha=g^{ij}g_{ij}=m,$$
$\nabla X^\alpha$, $\tilde{\nabla} \tilde{X}^\alpha$ and hence $w^\alpha$ are bounded.

It is not hard to see that $d$, $U^\alpha$ and $V^\alpha$ are all bounded on $[\delta, T]$. We remind to show $N$ and $W$ are bounded.

Since $N(T)=0$, by the evolution of $N$, we have
$$|N|_{g(t)} = \bigg|N_{ij}^k(x,t)-N_{ij}^k(x, T)\bigg|_{g(t)}  \le \int_t^T\bigg|\frac{\partial}{\partial s} N_{ij}^k(x,s)\bigg|_{g(t)}ds\le C'T.$$
Similarly, we can obtain the bounded of $W$.

Then the theorem follow the Cauchy-Schwarz inequality.
\end{proof}

\section{Backwards uniqueness of the mean curvature flow}

Now we can prove our main theorem.
\begin{proof}\emph{(of Theorem 1.1)}
Since $h_{ij}$ and $\tilde{h}_{ij}$ are bounded, the curvature $R$, tensor fields $Y$, $Z$ and $\nabla Y$ are all bounded. By using the backwards uniqueness theorem of Kotschwar \cite{K10}, we obtain $Y=0$, $Z=0$ on $M\times[0, T]$.
In particular, $H^{\alpha}=0$ for $\alpha=1, \cdots, m+n$, which implies $H(x, t)=\tilde{H}(x, t)$ for all $(x, t)\in M\times [0, T]$. So
$$\frac{\partial}{\partial t}(X-\tilde{X})=H-\tilde{H}=0,\quad and\quad X(T)-\tilde{X}(T)=0,$$
and we complete the proof of Theorem 1.
\end{proof}

Corollary 1.2 is a direct consequence of Theorem 1.1.
\begin{proof}\emph{(of Corollary 1.2)}
Indeed, let $\bar{\sigma}$ and $\sigma$ be two isometries of $R^n$ and $(M, g(T))$ respectively such that $(\bar{\sigma}\circ X(T))(x)=(X(T)\circ\sigma)(x)$. 
Since $(\bar{\sigma}\circ X(t))(x)$ and $(X(t)\circ\sigma)(x)$ are two solutions of MCF
with bounded second fundamental form and $(\bar{\sigma}\circ X(T))(x)=(X(T)\circ\sigma)(x)$. Then by Theorem 1.1, we have
$$(\bar{\sigma}\circ X(t))(x)=(X(t)\circ\sigma)(x).$$
This complete the proof of Corollary 1.2.
\end{proof}

\end{document}